\newtheorem{Theorem}{Theorem}[section]
\newtheorem{Definition}[Theorem]{Definition}
\newtheorem{Lemma}[Theorem]{Lemma}
\newtheorem{Remark}[Theorem]{Remark}
\newtheorem{Example}[Theorem]{Example}
\newtheorem{Hypothesis}{Hypothesis}
\numberwithin{equation}{section}
\begin{document}

\def\le{\left}
\def\r{\right}
\def\cost{\mbox{const}}
\def\a{\alpha}
\def\d{\delta}
\def\ph{\varphi}
\def\e{\epsilon}
\def\la{\lambda}
\def\si{\sigma}
\def\La{\Lambda}
\def\B{{\cal B}}
\def\A{{\mathcal A}}
\def\L{{\mathcal L}}
\def\O{{\mathcal O}}
\def\bO{\overline{{\mathcal O}}}
\def\F{{\mathcal F}}
\def\K{{\mathcal K}}
\def\H{{\mathcal H}}
\def\D{{\mathcal D}}
\def\C{{\mathcal C}}
\def\M{{\mathcal M}}
\def\N{{\mathcal N}}
\def\G{{\mathcal G}}
\def\T{{\mathcal T}}
\def\R{{\mathbb R}}
\def\I{{\mathcal I}}

\def\bw{\overline{W}}
\def\phin{\|\varphi\|_{0}}
\def\s0t{\sup_{t \in [0,T]}}
\def\lt{\lim_{t\rightarrow 0}}
\def\iot{\int_{0}^{t}}
\def\ioi{\int_0^{+\infty}}
\def\ds{\displaystyle}
\def\pag{\vfill\eject}
\def\fine{\par\vfill\supereject\end}
\def\acapo{\hfill\break}

\def\beq{\begin{equation}}
\def\eeq{\end{equation}}
\def\barr{\begin{array}}
\def\earr{\end{array}}
\def\vs{\vspace{.1mm}   \\}
\def\rd{\reals\,^{d}}
\def\rn{\reals\,^{n}}
\def\rr{\reals\,^{r}}
\def\bD{\overline{{\mathcal D}}}
\newcommand{\dimo}{\hfill \break {\bf Proof - }}
\newcommand{\nat}{\mathbb N}
\newcommand{\E}{\mathbb E}
\newcommand{\Pro}{\mathbb P}
\newcommand{\com}{{\scriptstyle \circ}}
\newcommand{\reals}{\mathbb R}

\def\Amu{{A_\mu}}
\def\Qmu{{Q_\mu}}
\def\Smu{{S_\mu}}
\def\H{{\mathcal{H}}}
\def\Im{{\textnormal{Im }}}
\def\Tr{{\textnormal{Tr}}}
\def\E{{\mathbb{E}}}
\def\P{{\mathbb{P}}}

%opening
\title{Smoluchowski-Kramers approximation and large deviations for infinite dimensional gradient systems }
\author{Sandra Cerrai, Michael Salins\\
\vspace{.1cm}\\
Department of Mathematics\\
 University of Maryland\\
College Park\\
 Maryland, USA}

\date{}

\maketitle

\begin{abstract}
  In this paper, we explicitly calculate the  quasi-potentials for the damped semilinear stochastic wave equation when the system is of gradient type.  We show that in this case the infimum of the quasi-potential with respect to all possible velocities does not depend on the density of the mass and does coincide with the quasi-potential of the corresponding stochastic heat equation that one obtains from the zero mass limit. This shows in particular  that the Smoluchowski-Kramers approximation can be used to approximate long time behavior in the zero noise limit, such as exit time and exit place from a basin of attraction.
\end{abstract}

\section{Introduction}
In the present paper, we consider the following damped wave equation in a bounded regular domain $\mathcal{O}\subset \mathbb{R}^d$, perturbed by noise
\begin{equation} \label{wave-eq}
\le\{  \begin{array}{l}
\ds{    \mu \frac{\partial^2 u^{\mu}_{\epsilon}}{\partial t^2} (t, \xi) = \Delta u^\mu_\e(t,\xi) - \frac{\partial u^{\mu}_{\epsilon}}{\partial t} (t,\xi) +B(u^\mu_\e(t,\cdot))(\xi) + \sqrt{\epsilon}\, \frac{\partial w^Q}{\partial t}(t,\xi), \ \ \ \ \xi \in \mathcal{O},\  t>0, }\\
\vs
\ds{ u^\mu_\e(0,\xi) = u_0(\xi),\ \ \ \
    \frac{\partial u^{\mu}_{\epsilon}}{\partial t}(0,\xi) = v_0(\xi),\ \ \xi \in\,\mathcal{O},\ \ \ \ \ u^\mu_\e(t, \xi) = 0,\ \  \xi \in \partial \mathcal{O},}
        \end{array}\r.
    \end{equation}
for some parameters $0<\e,\mu<<1$. Here, $\partial w^Q/\partial t$ is a cylindrical Wiener process, white in time and colored in space, with covariance operator $Q^2$, for some $Q \in\,\mathcal{L}(L^2(\mathcal{O}))$.  Concerning the non-linearity $B$, we assume
\[B(x)=-Q^2 DF(x),\ \ \ \ x \in\,L^2(\mathcal{O}),\]
for some $F:L^2(\mathcal{O})\to \reals$, satisfying suitable conditions.
We also consider the semi-linear heat equation
\begin{equation} \label{heat-eq}
\le\{  \begin{array}{l}
\ds{  \frac{\partial u_\epsilon}{\partial t} (t,\xi) = \Delta u_\epsilon(t,\xi) + B(u_\epsilon(t,\cdot))(\xi) + \sqrt{\epsilon}\, \frac{\partial w^Q}{\partial t}(t,\xi), \ \ \ \  \xi \in \mathcal{O}, \ t>0,}\\
\vs
\ds{  u_\epsilon(0,\xi) = u_0(\xi),\ \ \ \ \ \ u_\epsilon(t,\xi) = 0, \xi \in \partial \mathcal{O}.}
  \end{array}\r.
\end{equation}

In \cite{smolu2}, it has been proved that  if $\epsilon>0$ is fixed and $\mu$ tends to zero, the solutions of \eqref{wave-eq} converge to the solution of \eqref{heat-eq}, uniformly on compact intervals. More precisely, for any $\eta>0$ and $T>0$
\[\lim_{\mu\to 0}\mathbb{P}\le(\sup_{t \in\,[0,T]}|u^\mu_\e(t)-u_\e(t)|_{L^2(\mathcal{O})}>\eta\r)=0.\]
Moreover, in the case $d=1$ and $Q=I$, it has been proven that for any fixed $\epsilon>0$, the first marginal of the invariant measure  of equation \eqref{wave-eq} coincides with the invariant measure of equation \eqref{heat-eq}, for any $\mu>0$.

In this paper, we are interested in comparing the small noise behavior of the two systems. More precisely, we keep $\mu>0$ fixed and let $\epsilon$ tend to zero, to study some relevant quantities associated with the large deviation principle for these systems, as the quasi-potential that describes also the asymptotic behavior of the expected exit time  from a domain and the corresponding exit places.  Due to the gradient structure of \eqref{wave-eq}, as in the finite dimensional case studied in \cite{f}, we are here able to calculate explicitly the quasi-potentials $V^\mu(x,y)$ for system \eqref{wave-eq}
as
\begin{equation}
\label{m2-i1}
   V^\mu(x,y) = \left|(-\Delta)^{\frac{1}{2}}Q^{-1} x \right|_{L^2(\mathcal{O})}^2 + 2F(x) + \mu \left|Q^{-1} y \right|_{L^2(\mathcal{O})}^2.\end{equation}
   Actually, we can prove that for any $\mu>0$
   \begin{equation}
   \label{m2-i2}
   V^{\mu}(x,y) = \inf \left\{ I^{\mu}_{-\infty}(z)\, :\, z(0) = (x,y),\  \lim_{t \to - \infty} \left|C_\mu^{-1/2} z(t) \right|_H = 0 \right\},\end{equation}
   where
   \[I^\mu_{-\infty}(z)=\frac 12 \int_{-\infty}^0\le|Q^{-1}\le( \mu \frac{\partial^2 \varphi}{\partial t^2} (t) - \Delta \varphi(t) + \frac{\partial \varphi}{\partial t} (t) -B(\varphi(t))\r)\r|_{L^2(\mathcal{O})}^2\,dt\]
   with $\varphi(t)=\Pi_1z(t)$, and
\[C_\mu(u,v)=\frac 12\le((-\Delta)^{-1}Q^2 u,\frac 1\mu(-\Delta)^{-1}Q^2v\r),\ \ \ \ (u,v) \in\,L^2(\mathcal{O})\times H^{-1}(\mathcal{O}).\]
From \eqref{m2-i2}, we obtain that
\[   V^\mu(x,y) \geq \left|(-\Delta)^{\frac{1}{2}}Q^{-1} x \right|_{L^2(\mathcal{O})}^2 + 2F(x) + \mu \left|Q^{-1} y \right|_{L^2(\mathcal{O})}^2.\]
Thus, we obtain the equality \eqref{m2-i1} by constructing a a path which realizes the minimum.
An immediate consequence of \eqref{m2-i1} is that for each $\mu>0$
\begin{equation}
\label{m2-i3}
V_\mu(x)=\inf_{y \in\,H^{-1}(\mathcal{O})}V^\mu(x,y)=V^\mu(x,0)=V(x),\end{equation}
where $V(x)$ is the quasi-potential associated with equation \eqref{heat-eq}.

Now, consider an open bounded domain $G\subset L^2(D)$,  which is invariant for $u^\mu_0$ and is attracted to the asymptotically stable equilibrium  $0$, and for any $x \in\,G$ let us define
\[\tau^{\mu}_\e:=\inf\,\le\{t\geq 0\,:\,u^{\mu}_\e(t) \in\,\partial G\,\r\} \text{ and } \tau_\epsilon = \inf\{t \geq 0: u_\epsilon(t) \in \partial G\}.\]
In a forthcoming paper we plan to prove
\begin{equation}
\label{intro22}
\lim_{\e\to 0}\,\e\log\,\E\tau^{\mu}_\e=\inf_{y \in\,\partial G} V_\mu(y)\end{equation}
As a consequence of \eqref{m2-i3}, this would imply that, in the gradient case
for any fixed $\mu>0$  it holds
\[\lim_{\e\to 0}\,\e\log\,\E\tau^{\mu}_\e=\inf_{y \in\,\partial G} V(y) = \lim_{\e \to 0}\,\e\log\,\E\tau^\mu_\e.\]

\section{Preliminaries and assumptions}

Let $H=L^2(\mathcal{O})$ and let $A$ be the realization of the Laplacian with Dirichlet boundary conditions in $H$.  Let $\{e_k\}_{k \in\,\nat}$ be the complete orthonormal basis of eigenvectors of $A$, and let $\{-\a_k\}_{k \in\,\nat}$ be the corresponding sequence of eigenvalues, with  $0<\a_1\leq  \alpha_k \leq \alpha_{k+1}$, for any $k \in\,\nat$.

The stochastic perturbation  is given  by a cylindrical Wiener process $w^{Q}(t,\xi)$, for $t\geq 0$ and $\xi \in\,{\cal O}$, which
is assumed to be white in time and colored in space, in the case
of space dimension $d>1$. Formally, it  is defined as the infinite sum
\begin{equation}
\label{noise3}
w^{Q}(t,\xi)=\sum_{k=1}^{+\infty} Q e_{k}(\xi)\,\beta_{k}(t),
\end{equation} where
 $\{e_{k}\}_{k \in\,\nat}$ is the
complete orthonormal basis in $L^2(\mathcal{O})$ which diagonalizes $A$ and  $\{\beta_{k}(t)\}_{k
\in\,\nat}$ is a sequence of mutually independent standard
Brownian motions defined on the same complete stochastic basis
$(\Omega,\mathcal{F}, \mathcal{F}_t, \mathbb{P})$.
\begin{Hypothesis}
\label{H1}
The linear operator $Q$ is bounded in $H$ and diagonal with respect to the basis $\{e_k\}_{k \in\,\nat}$ which diagonalizes $A$. Moreover, if $\{\la_k\}_{k \in\,\nat}$ is the corresponding sequence of eigenvalues, we have
\begin{equation}
\label{m1}
  \sum_{k=1}^\infty \frac{\lambda_k^2}{\alpha_k} < + \infty.
\end{equation}
\end{Hypothesis}
In particular, if $d=1$ we can take $Q=I$, but if $d>1$ the noise has to colored in space.
Concerning the non-linearity $B$, we assume that it has the following gradient structure.

\begin{Hypothesis}
\label{H2}
There exists $F : H \to \mathbb{R}$ of class $C^1$, with $F(0)=0$, $F(x) \geq 0$ and $\left<D\!F(x),x \right> \geq 0$ for  all $x \in\,H$, such that
\[B(x) = -Q^2 D\!F(x),\ \ \ \ x \in\,H.\]
Moreover,
\begin{equation} \label{m2-00}
  \left| D\!F(x) - D\!F(y) \right|_H \leq \kappa \left|x - y \right|_H,\ \ \ \ x, y \in\,H.
\end{equation}
\end{Hypothesis}

\begin{Example}
\em{ \begin{enumerate}
\item  Assume $d=1$ and take $Q=I$. Let $b: \mathbb{R} \to \mathbb{R}$ be a decreasing Lipschitz continuous function with $b(0)=0$.  Then the composition operator $B(x)(\xi) = b(x(\xi))$, $\xi \in\,\mathcal{O}$, is of gradient type.  Actually, if we set
  \begin{equation*}
    F(x) = -\int_\mathcal{O} \int_0^{x(\xi)} b(\eta) d \eta d \xi,\ \ \ \ x \in\,H,
  \end{equation*}
  we have
  \[B(x)=-D\!F(x),\ \ \ \ x \in\,H.\]
  Moreover,
 it is clear that $F(0)=0$, $F(x) \geq 0$ for all $x \in\,H$,  and
 \[\left<D\!F(x),x \right>=-\int_\mathcal{O} b(x(\xi))x(\xi)\,d\xi\geq 0,\ \ \  x \in\,H.\]
   \item
  Assume now $d\geq 1$, so that $Q$ is a general bounded operator in $H$, satisfying Hypothesis \ref{H1}.  Let $b: \mathbb{R} \to \mathbb{R}$ be a function of class $C^1$, with Lipschitz-continuous first derivative, such  that $b(0) = 0$ and $b(\eta) \geq 0$, for all $\eta \in\,\reals$. Moreover, the only local minimum of $b$ occurs at $0$.  Let
  \[F(x)= \int_\mathcal{O} b( x(\xi)) d\xi,\ \ \ x \in\,H.\]
It is immediate to check that $F(0)=0$ and $F(x)\geq 0$, for all $x \in\,H$.  Furthermore, for any $x \in\,H$
\[D\!F(x)(\xi)= b^\prime (x(\xi)),\ \ \ \  \xi \in\,\mathcal{O}.\]
 Therefore, the nonlinearity
 \[B(x) = -Q^2  b^\prime(x(\cdot)),\ \ \ x \in\,H,\]
satisfies Hypothesis \ref{H2}.
  \end{enumerate}}
\end{Example}

\medskip

For any $\delta \in \mathbb{R}$, we denote by $H^\delta$  the completion of $C_0^\infty(\mathcal{O})$ in the norm
\begin{equation*}
  |u|_{H^\delta}^2 = \sum_{k=1}^\infty \alpha_k^\delta \left<u, e_k \right>_H.
\end{equation*}
This is a Hilbert space, with the scalar product
\begin{equation*}
  \left< u,v \right>_{H^\delta} = \sum_{k=1}^\infty \alpha_k^\delta \left< u, e_k \right>_H \left< v, e_k \right>_H.
\end{equation*}
In what follows, we shall define $\H_\delta = H^{\delta} \times H^{\delta -1}$ and we shall set $\H = \H_0$.

Next, for $\mu>0$, we define $\Amu : D(\Amu) \subset \H_\delta \to \H_\delta$ by
\begin{equation}
  \Amu (u,v) = \left( v, \frac{1}{\mu} A u - \frac{1}{\mu}v \right), \, (u,v) \in D(\Amu) = \H_{\delta +1},
\end{equation}
and we denote by $\Smu(t)$ the semigroup on $\H$ generated by $\Amu$. \\In \cite[Proposition 2.4]{smolu2}, it has been proven that $\Smu(t)$ is a $C_0$-semigroup  of negative type, namely, there exist $M_\mu>0$ and $\omega_\mu>0$ such that
\begin{equation} \label{negative-type-eq}
  \|\Smu(t) \|_{L(\H_\delta)}  \leq M_\mu e^{-\omega_\mu t},\ \ \ \ t\geq 0.
\end{equation}
Moreover, for any $\mu>0$ we  define the operator $\Qmu : H^{\delta-1} \to \H_\delta$ by setting
\begin{equation*}
  \Qmu v = \frac 1\mu \left( 0 , Qv \right),\ \ \ v \in\,H^{\d-1}.
\end{equation*}
Therefore, if we define
\[\hat{F}(u,v)=F(u),\ \ \ \ \hat{Q}(u,v)=Q u,\ \ \ \ \ (u,v) \in\,\H,\]
equation \eqref{wave-eq} can be rewritten as the following abstract stochastic evolution equation in the space $\H$
\begin{equation}
\label{abstract-wave}
dz_\e^\mu(t)=\le[A_\mu z_\e^\mu(t)-Q_\mu \hat{Q}D\!\hat{F}(z_\e^\mu(t))\r]\,dt+\sqrt{\e}\,Q_\mu\,dw(t),\ \ \ \ \ z^\mu_\e(0)=(u_0,v_0).
\end{equation}
  Analogously, equation \eqref{heat-eq} can be rewritten as the following abstract stochastic evolution equation in $H$
\begin{equation}
\label{abstract-heat}
du_\e(t)=\le[A u_\e(t)-Q^2 D\!F(u_\e(t))\r]\,dt+\sqrt{\e}\,Qdw(t),\ \ \ \ u_\e(0)=u_0.
\end{equation}

\begin{Definition}
\begin{enumerate}
\item   A predictable process $z^{\mu}_{\epsilon} \in L^2(\Omega,C([0,T];\H))$ is a mild solution to \eqref{abstract-wave} if
\[    z^{\mu}_{\epsilon}(t) = \Smu(t)(u_0,v_0) -\int_0^t \Smu(t-s) Q_\mu \hat{Q} D\!\hat{F}(z^{\mu}_{\epsilon}(s)) ds + \sqrt{\epsilon} \int_0^t \Smu(t) \Qmu dw(s).
  \]
\item   A predictable process $u^\epsilon \in L^2(\Omega, C([0,T];H))$ is a mild solution to \eqref{abstract-heat} if
  \[
    u^\epsilon(t) = e^{tA} u_0 - \int_0^t e^{(t-s)A} Q^2 D\!F( u^\epsilon(s)) ds + \sqrt{\epsilon} \int_0^t e^{(t-s)A} Q dw(s).
  \]
  \end{enumerate}
\end{Definition}

\begin{Remark}
{\em If we define
\[C_\mu=\int_0^{+\infty} S_\mu(t)Q_\mu Q^\star_\mu S^\star_\mu(t)\,dt,\]
as shown in \cite[Proposition 5.1]{smolu2} we have
\[C_\mu(u,v)=\frac 12\le((-A)^{-1}Q^2 u,\frac 1\mu(-A)^{-1}Q^2v\r),\ \ \ \ (u,v) \in\,\H.\]
Therefore,
we get
\[
2A_\mu C_\mu\, D\!\hat{F}(u,v)=(0,-\frac 1\mu Q^2 D\!F(u))=-Q_\mu \hat{Q} D\!\hat{F}(u,v) ,\ \ \ \ (u,v) \in\,\H.
\]
This means that equation \eqref{abstract-wave} can be rewritten as
\[dz_\e^\mu(t)=\le[A_\mu z_\e^\mu(t)+2A_\mu C_\mu\, D\!\hat{F}(z_\e^\mu(t))\r]\,dt+\sqrt{\e}\,Q_\mu\,dw(t),\ \ \ \ \ z^\mu_\e(0)=(u_0,v_0).
\]
In the same way, equation \eqref{abstract-heat} can be rewritten as
\[
du_\e(t)=\le[A u_\e(t)+2A CD\!F(u_\e(t))\r]\,dt+\sqrt{\e}\,Qdw(t),\ \ \ \ u_\e(0)=u_0,
\]
where
\[
C=\int_0^{+\infty} e^{tA}QQ^\star e^{tA^\star}\,dt=\frac 12 (-A)^{-1}Q^2.
\]
In particular, both \eqref{abstract-wave} and \eqref{abstract-heat} are gradient systems (for more details see \cite{fur})}
\end{Remark}

As we are assuming that $D\!F:H\to H$ is Lipschitz continuous, we have that equation \eqref{abstract-wave} has a unique mild solution $z^{\mu}_{\epsilon} \in L^p(\Omega,C([0,T];\H))$ and  equation \eqref{abstract-heat} has a unique mild solution  $u^\epsilon \in L^p(\Omega, C([0,T];H))$, for any $p\geq 1$ and $T>0$.

In \cite[Theorem 4.6]{smolu2} it has been proved that in this case the so-called Smoluchowski-Kramers approximation holds. Namely, for any $\e, T>0$ and $\eta>0$
\[\lim_{\mu\to 0}\,\Pro\le(\sup_{t \in\,[0,T]}|u^\mu_\e(u)-u_\e(t)|_H>\eta\r)=0,\]
where $u^\mu_\e(t)=\Pi_1z^\mu_\e(t).$

\section{A characterization of the quasi-potential}

For any $\mu>0$ and $t_1<t_2$, and for any $z \in\,C([t_1,t_2];\H)$ and $z_0 \in\,\H$, we define
\begin{equation} \label{rate-functional-wave}
  I^\mu_{t_1,t_2}(z) = \frac{1}{2} \inf \left\{ \left| \psi \right|_{L^2((t_1,t_2);H)}^2 : z = z^\mu_{z_0,\psi} \right\}
\end{equation}
where $z^\mu_{\psi}$ solves the following skeleton equation associated with \eqref{abstract-wave}
\begin{align} \label{z-psi-control-eq}
  z^\mu_{z_0,\psi}(t) = \Smu(t - t_1) z_0 + \int_{t_1}^t \Smu(t-s) A_\mu C_\mu\,D\!\hat{F}_\mu(z^\mu_{z_0,\psi}(s)) ds+ \int_{t_1}^t \Smu(t-s) \Qmu \psi(s) ds.
\end{align}
Analogously,
for $t_1<t_2$, and for any $\varphi\in\,C([t_1,t_2];H)$ and $u_0 \in\,H$, we define
\begin{equation} \label{rate-functional-heat}
  I_{t_1,t_2} ( \varphi) = \frac{1}{2} \inf \left\{ \left| \psi \right|_{L^2((t_1,t_2);H)}^2 : \varphi = \varphi_{\psi,u_0} \right\},
\end{equation}
where $\varphi_{u_0,\psi}$ solves the  problem
\begin{equation} \label{varphi-psi-control-eq}
  \varphi_{u_0,\psi}(t) = e^{(t-t_1)A} u_0 - \int_{t_1}^t e^{(t-s)A} Q^2 D\!F(\varphi_{u_0,\psi}(s)) ds + \int_{t_1}^t e^{(t-s)A} Q \psi(s)ds.
\end{equation}
In what follows, we shall also denote
\[  I^\mu_{-\infty}(z) = \sup_{t<0} I^\mu_{t,0}(z),\ \ \ \ \ I_{-\infty}(\varphi) = \sup_{t<0} I_{t,0}(\varphi).
\end{equation*}

Since \eqref{abstract-wave} and \eqref{abstract-heat} have additive noise, as a consequence of the contraction lemma we have that the family $\{z^{\mu}_{\epsilon}\}_{\epsilon>0}$ satisfies the large deviations principle in $C([0,T];\H)$, with respect to the rate function $I^\mu_{0,T}$ and the family $\{u^\epsilon\}_{\epsilon>0}$ satisfies the large deviations principle in $C([0,T];H)$, with respect to the rate function $I_{0,T}$.

In what follows, for any fixed $\mu>0$ we shall denote by $V^\mu$  the quasi-potential associated with system \eqref{abstract-wave}, namely
\[   V^\mu(x,y) = \inf \left\{ I^\mu_{0,T}(z)\, :\, z(0)= 0,\  z(T) = (x,y),\  T>0 \right\}.
 \]
Analogously, we shall denote by $V$ the  quasi-potential associated with equation \eqref{abstract-heat}, that is
  \[  V(x) = \inf \left\{ I_{0,T}(\varphi)\,:\, \varphi(0) = 0,\  \varphi(T) = x,\  T>0 \right\}.\]
Moreover, for any $\mu>0$ we shall define
\[V_\mu(x)=\inf_{y \in\,H^{-1}(\mathcal{O})} V^\mu(x,y)=\inf \left\{ I^\mu_{0,T}(z)\,:\, z(0) = 0,\  \Pi_1 z(T) = x,\  T>0 \right\}.\]
In \cite[Proposition 5.4]{cerrok}, it has been proven   that $V(x)$ can be represented as
\begin{equation*}
  V(x) = \inf \left\{ I_{-\infty} (\varphi)\,:\, \varphi(0) = x,\  \lim_{t \to -\infty} \left|\varphi(t) \right|_{H} = 0 \right\}.
\end{equation*}
Here, we want   to prove a similar representations for $V^\mu(x,y)$, for any fixed $\mu>0$.
To this purpose, we first  introduce the operator $L^\mu_{t_1,t_2} : L^2((t_1,t_2):H) \to \H$, defined as
\begin{equation}
  L^\mu_{t_1,t_2} \psi = \int_{t_1}^{t_2} \Smu(t_2-s) \Qmu \psi(s) ds.
\end{equation}

\begin{Theorem}
  For any $\mu>0$ and $(x,y) \in \H$ we have
  \begin{equation}
    V^{\mu}(x,y) = \inf \left\{ I^{\mu}_{-\infty}(z)\, :\, z(0) = (x,y),\  \lim_{t \to - \infty} \left|C_\mu^{-1/2} z(t) \right|_H = 0 \right\}.
  \end{equation}
\end{Theorem}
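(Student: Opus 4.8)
The plan is to prove the two inequalities separately, exploiting the fact that both $V^\mu(x,y)$ and the right-hand side measure the minimal cost of a \emph{forward}-in-time trajectory of the skeleton equation \eqref{z-psi-control-eq} that joins the equilibrium $0$ to $(x,y)$; the only difference is that $V^\mu$ allows a finite interval with exact start at $0$, whereas the right-hand side allows the infinite interval $(-\infty,0]$ with the trajectory merely approaching $0$ as $t\to-\infty$. The structural fact to be used repeatedly is that $(0,0)$ is an equilibrium of the uncontrolled system: since $F\geq 0$, $F(0)=0$ and $F\in C^1$, the origin is a global minimum of $F$, so $D\!\hat{F}(0,0)=0$ and the constant path $z\equiv 0$ solves \eqref{z-psi-control-eq} with $\psi=0$; hence $I^\mu_{t_1,t_2}(0)=0$ for all $t_1<t_2$.

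For the inequality $R\leq V^\mu(x,y)$, where $R$ denotes the right-hand side, fix $\delta>0$ and pick $T>0$ and $\zeta\in C([0,T];\H)$ with $\zeta(0)=0$, $\zeta(T)=(x,y)$ and $I^\mu_{0,T}(\zeta)\leq V^\mu(x,y)+\delta$. Because \eqref{z-psi-control-eq} is autonomous, shifting time to $[-T,0]$ preserves the cost; extending the shifted path by the value $0$ for $t\leq -T$ produces $z$ on $(-\infty,0]$ with $z(0)=(x,y)$ and $z\equiv 0$ near $-\infty$. By the zero-cost property of the equilibrium, extending the interval beyond $-T$ adds no cost, so $I^\mu_{-\infty}(z)=\sup_{t<0}I^\mu_{t,0}(z)=I^\mu_{-T,0}(z)=I^\mu_{0,T}(\zeta)\leq V^\mu(x,y)+\delta$, while the decay condition $\lim_{t\to-\infty}|C_\mu^{-1/2}z(t)|_H=0$ holds trivially. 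Letting $\delta\to 0$ gives $R\leq V^\mu(x,y)$.

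For the reverse inequality $V^\mu(x,y)\leq R$, take any admissible $z$ in the right-hand side with $I^\mu_{-\infty}(z)<\infty$. For each $T>0$ the restriction of $z$ to $[-T,0]$ is a forward trajectory from $z(-T)$ to $(x,y)$ with cost $I^\mu_{-T,0}(z)\leq I^\mu_{-\infty}(z)$. It then suffices to prepend a controlled trajectory joining $0$ to $z(-T)$ whose cost vanishes as $T\to\infty$, and to concatenate it with the shifted restriction to obtain an admissible competitor for $V^\mu(x,y)$; since concatenation of controls is cost-subadditive, this yields
\[
V^\mu(x,y)\leq (\text{connecting cost})+I^\mu_{-\infty}(z),
\]
and the claim follows by letting $T\to\infty$ and then infimizing over $z$.

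The main obstacle is precisely this connecting estimate: one must show that the minimal cost of steering \eqref{z-psi-control-eq} from $0$ to a point $w$ tends to $0$ as $|C_\mu^{-1/2}w|_H\to 0$. I would first treat the linear skeleton, using the operator $L^\mu_{t_1,t_2}$: the minimal control energy to reach $w$ over an interval of length $s$ is governed by the finite-horizon Gramian $\int_0^s \Smu(r)\Qmu\Qmu^\star\Smu^\star(r)\,dr$, whose image is the reachable set and which increases monotonically, as $s\to\infty$, to $C_\mu=\int_0^{+\infty}\Smu(t)\Qmu\Qmu^\star\Smu^\star(t)\,dt$. Consequently the infimum over $s$ of the minimal energy equals $|C_\mu^{-1/2}w|_H^2$, so for $w=z(-T)$ one chooses the connecting time large enough that the linear cost is close to $|C_\mu^{-1/2}z(-T)|_H^2\to 0$. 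The nonlinear term is absorbed as a lower-order perturbation: since $D\!F$ is Lipschitz by Hypothesis \ref{H2} and $D\!F(0)=0$, its contribution along a trajectory staying near $0$ is controlled by the $\H$-norm of that trajectory through \eqref{negative-type-eq} and a Gronwall estimate, hence made negligible. This step is the technical heart and parallels the argument of \cite[Proposition 5.4]{cerrok} for the heat equation.
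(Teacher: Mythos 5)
Your proposal is correct and takes essentially the same route as the paper: one inequality by extending a finite-time minimizer by the zero equilibrium (using $D\!F(0)=0$), and the other by concatenating the tail of a near-minimizer with a low-energy connecting path from $0$ to $z(t_\e)$ built from the controllability operator $L^\mu_{t_1,t_2}$, whose inverse the paper bounds by $\left|C_\mu^{-1/2}\,\cdot\,\right|_\H$ in Lemma \ref{L-inverse-lem} (your finite-horizon Gramian estimate), with the Lipschitz nonlinearity absorbed as a perturbation of the control.
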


\begin{proof}
  First we observe that by the definitions of $I^\mu_{t_1,t_2}$ and $V^\mu(x,y)$,
  \begin{equation*}
    V^\mu(x,y) = \inf \left\{I^\mu_{-T,0}(z): z(-T)=0, z(0) = (x,y), T>0 \right\}.
  \end{equation*}
Now, for any $\mu>0$ and $(x,y) \in\,\H$,  we define
  \begin{equation*}
    M^\mu(x,y):= \inf \left\{ I^{\mu}_{-\infty}(z)\, :\, z(0) = (x,y),\ \lim_{t \to - \infty} \left|C_\mu^{-1/2} z(t) \right|_\H = 0 \right\}.
  \end{equation*}
  Clearly, we want to prove that $M^\mu(x,y)=V^\mu(x,y)$, for all $(x,y) \in\,\H$.

  If $z$ is a continuous path with $z(-T)=0$ and $z(0) = (x,y)$, we can extend it  in $C((-\infty,0);\H)$, by defining $z(t) = 0$, for $t<-T$.  Then, since $D\!F(0)=0$,  we see that
  \[I^\mu_{-\infty}(z) = I^\mu_{-T,0}(z),\]
so that $M^\mu(x,y) \leq V^\mu(x,y)$.

Now, let us prove that the opposite inequality holds.  $M^\mu(x,y)= +\infty$, there is nothing else to prove.  Thus, we assume that $M^\mu(x,y) < +\infty$.  This means that for any $\e>0$ there must be some  $z^\mu_\e \in C((-\infty,0);\H)$, with the properties that $z^\mu_\e(0) = (x,y)$ and
\[\lim_{t \to -\infty} |C_\mu^{-1/2} z^\mu_\e(t) |_\H = 0,\ \ \ I^\mu_{-\infty}(z^\mu_\e) \leq M^\mu(x,y) + \epsilon.\]

  In what follows we shall prove that the following auxiliary result holds.
  \begin{Lemma} \label{L-inverse-lem}
  For any $\mu>0$, there exists  $T_\mu > 0$ such that for any $t_1<t_2-T_\mu$ we have $\text{Im}(L^\mu_{t_1,t_2}) = D(C_\mu^{-1/2})$ and
  \begin{equation}
    \left|(L^\mu_{t_1,t_2})^{-1} z \right|_{L^2((t_1,t_2);H)} \leq c(\mu, t_2 - t_1) \left| C_\mu^{-1/2} z \right|_\H,\ \ \ z \in\,\text{Im}(L^\mu_{t_1,t_2}),
  \end{equation}
  where
  \begin{equation*}
    C_\mu (x,y) = \left( (-A)^{-1} Q^2 x, \frac{1}{\mu} (-A)^{-1} Q^2 y \right).
  \end{equation*}
\end{Lemma}

 Thus,  let $T_\mu$ be the constant from Lemma \ref{L-inverse-lem} and let $t_\epsilon<0$ be such that
 \[|C_\mu^{- 1/2} z^\mu_\e(t_\epsilon)|_{\H} < \epsilon.\]
Moreover, let
  $\psi^\mu_\e := (L^\mu_{t_\epsilon - T_\mu, t_\epsilon})^{-1} z^\mu_\e(t_\epsilon)$.  By Lemma \ref{L-inverse-lem} we have
  \begin{equation*}
    \left| \psi^\mu_\e \right|_{L^2(t_\epsilon - T_\mu, t_\epsilon;H)} \leq c_\mu \epsilon.
  \end{equation*}
Next, we define
  \begin{equation*}
    \hat{z}^\mu_\e(t) =  \int_{t_\epsilon - T_\mu}^t \Smu(t-s) Q_\mu \psi^\mu_\e(s) ds.
  \end{equation*}
Thanks to  \eqref{negative-type-eq}, we have
  \begin{equation*}
  \begin{array}{l}
    \displaystyle{ \int_{t_\epsilon - T_\mu}^{t_\epsilon} |\hat{z}^\mu_\e(t) |_\H^2 dt
    \leq \int_{t_\epsilon - T_\mu}^{t_\epsilon} \left( \int_{t_\epsilon - T_\mu}^t \frac{M_\mu}{\mu} e^{-\omega_\mu (t-s)}
    |Q \psi^\mu_\e(s)|_{H^{-1}} ds \right)^2 dt}\\
    \vs
    \displaystyle{ \leq \frac{M_\mu^2}{2 \mu^2 \omega_\mu}\int_{t_\epsilon - T_\mu}^{t_\epsilon}  |Q \psi^\mu_\e|^2_{L^2((t_\epsilon - T_\mu, t\epsilon);H^{-1})} dt \leq T_\mu \frac{M_\mu^2}{2\mu^2 \omega_\mu} |Q \psi^\mu_\e|^2_{L^2((t_\epsilon - T_\mu, t\epsilon);H^{-1})}\leq c_\mu \e^2.}
    \end{array}
  \end{equation*}
Furthermore, $\hat{z}^\mu_\e(t_\epsilon - T_\mu) = 0$ and $\hat{z}^\mu_\e(t_\epsilon) = z(t_\epsilon)$.  Finally, we notice that
\[\begin{array}{l}
\ds{    \hat{z}^\mu_\e(t) = -\int_{t_\epsilon - T_\mu}^t \Smu(t-s) \Qmu Q D\!{F}(\hat{z}^\mu_\e(s)) ds+ \int_{t_\epsilon - T_\mu}^t \Smu(t-s) \Qmu \left( \psi^\mu_\e(s) + Q D\!{F}( \hat{z}^\mu_\e(s)) \right) ds,}
\end{array}
  \]
so that
  \begin{equation*}
    I^\mu_{t_\epsilon - T_\mu, t_\epsilon}(\hat{z}^\mu_\e) = \frac{1}{2} \left| \psi^\mu_\e + Q D\!{F}(\hat{z}^\mu_\e(s)) \right|_{L^2((t_\epsilon - T_\mu, t_\epsilon);H)}^2
    \leq c_\mu \epsilon^2.
  \end{equation*}
  Now if we define
  \begin{equation*}
    \tilde{z}^\mu_\e(t) = \begin{cases}
                    \hat{z}^\mu_\e(t) & \text{ if } t_\epsilon - T_\mu \leq t < t_\epsilon \\
                    z^\mu_\e(t) & \text{ if } t_\epsilon \leq t \leq 0,
                   \end{cases}
  \end{equation*}
  we see that $\tilde{z}^\mu_\e \in C((t_\epsilon - T_\mu,0);\H)$ and
  \begin{equation*}
    V^\mu(x,y) \leq I^\mu_{t_\epsilon - T_\mu,0}(\tilde{z}^\mu_\e) = I^\mu_{t_\epsilon - T_\mu, t_\epsilon}(\hat{z}^\mu_\e) + I^\mu_{t_\epsilon,0}(z^\mu_\epsilon) \leq c_\mu \epsilon^2 + M^\mu(x,y) + \epsilon.
  \end{equation*}
Due to the arbitrariness  of $\epsilon>0$, we can conclude.
\end{proof}

{\em Proof of Lemma \ref{L-inverse-lem}.} It is immediate to check that
  \begin{equation*}
    \left|(L^\mu_{t_1,t_2})^\star z \right|_{L^2((t_1,t_2);H)}^2 = \frac{1}{\mu} \int_0^{t_2-t_1} \left| Q_\mu^\star S_\mu^\star(s) z \right|_H^2 ds.
  \end{equation*}
  therefore, since
  \[Q_\mu^\star(u,v)=\frac 1\mu (-A)^{-1} Qv,\ \ \ (u,v) \in\,\H,\]
  (see \cite[Section 5]{smolu2} for a proof), we can conclude
  \begin{equation*}
    \left|(L^\mu_{t_1,t_2})^\star z \right|_{L^2((t_1,t_2);H)}^2 = \frac{1}{\mu^2} \int_0^{t_2-t_1} \left| Q(-A)^{-1}\Pi_2 S_\mu^\star(s) z \right|_H^2 ds.
  \end{equation*}
Now, if we expand  $S_\mu^\star(t)$ into Fourier coefficients, by \cite[Proposition 2.3]{smolu2}, we get
  \begin{equation*}
    S_\mu^\star (u,v) = \sum_{k=1}^\infty \left( \hat{f}_k^\mu(t) e_k, \hat{g}_k^\mu(t) e_k \right)
  \end{equation*}
  where
\[\le\{\begin{array}{ll}
\ds{      \mu \frac{d\hat{f}_k^\mu}{dt}(t) = - \hat{g}_k^\mu(t),} & \ds{\hat{f}_k^\mu(0) = u_k = \left< u, e_k \right>_H} \\
&\vs
\ds{      \mu \frac{d\hat{g}_k^\mu}{dt}(t) = \mu \alpha_k \hat{f}_k^\mu(t) - \hat{g}_k^\mu(t), } & \ds{\hat{g}_k^\mu(0) = v_k = \left< v, e_k \right>_H.}
\end{array}\r.\]
From these equations we see that
  \begin{equation*}
    \left|\hat{g}_k^\mu(t) \right|^2 = - \frac{\mu^2 \alpha_k}{2} \frac{d}{dt} \left| \hat{f}_k^\mu(t) \right|^2 - \frac{\mu}{2} \frac{d}{dt} \left|\hat{g}_k^\mu(t) \right|^2.
  \end{equation*}
 This means that
  \begin{equation}
  \label{m2-1}
  \begin{array}{l}
\ds{    \left|(L^\mu_{t_1,t_2})^\star z \right|_{L^2((t_1,t_2);H)}^2 }\\
\vs
    \displaystyle{ = \frac{1}{2} \sum_{k=1}^\infty \left( \frac{\lambda_k^2}{\alpha_k} |u_k|^2 + \frac{\lambda_k^2}{\mu \alpha_k^2} |v_k|^2  - \frac{\lambda_k^2}{\alpha_k} \left| \hat{f}_k^\mu(t_2-t_1) \right|^2 - \frac{\lambda_k^2}{\mu \alpha_k^2} \left| \hat{g}_k^\mu(t_2 -t_1) \right|^2  \right)}\\
    \vs
    \displaystyle{ = \frac{1}{2} \left( \left| C_\mu^{1/2} z \right|_\H^2 - \left| C_\mu^{1/2} S_\mu^\star(t_2-t_1)z \right|_\H^2 \right).}
  \end{array}
  \end{equation}
Notice that
  \begin{equation*}
    (1 \wedge \sqrt{\mu}) \left| C_\mu^{1/2} z \right|_\H \leq \left|C_1^{1/2} z \right|_\H \leq (1 + \sqrt{\mu}) \left| C_\mu^{1/2} z \right|_\H
  \end{equation*}
  and that $C_1^{1/2}$ commutes with $S_\mu^\star(t)$.
Therefore, by \eqref{negative-type-eq}
  \begin{equation*}
    \left|C_\mu^{1/2} S_\mu^\star(t) z \right|_\H
    \leq \frac1{1\wedge \sqrt{\mu}} \left| S_\mu^\star(t) C_1^{1/2} z \right|_\H
    \leq  \frac{1+\sqrt{\mu}}{1\wedge \sqrt{\mu}}M_\mu  e^{-\omega_\mu t} \left| C_\mu^{1/2} z \right|_\H.
  \end{equation*}
According to \eqref{m2-1}, this implies
  \begin{equation*}
    \left|(L^\mu_{t_1,t_2})^\star z \right|_{L^2((t_1,t_2);H)}^2 \geq \frac{1}{2} \left( 1 -  \le(\frac{1+\sqrt{\mu}}{1\wedge \sqrt{\mu}\,}\r)^2M_\mu^2 e^{-2 \omega_\mu (t_2 -t_1)} \right) \left|C_\mu^{1/2} z \right|_\H^2,
  \end{equation*}
  so that, if we take $T_\mu>0$ such that
  \[\le(\frac{1+\sqrt{\mu}}{1\wedge \sqrt{\mu}\,}\r)^2M_\mu^2 e^{-2 \omega_\mu T_\mu}<1\]
  we can conclude.
\begin{flushright}
$\Box$
\end{flushright}

\section{The main result}

  If $z \in\,C((-\infty,0];\H)$ is such that $I^\mu_{-\infty,0}(z) < +\infty$,  then we have
  \begin{equation} \label{I-mu-explicit-eq}
    I^\mu_{-\infty}(z) = \frac{1}{2} \int_{-\infty}^0 \left| Q^{-1} \left( \mu \frac{\partial^2 \varphi}{\partial t^2}(t) + \frac{\partial \varphi}{\partial t}(t) - A \varphi(t) + Q^2 D\!F(\varphi(t)) \right) \right|_H^2 dt.
  \end{equation}
  Actually, if $I^\mu_{-\infty,0}(z) < +\infty$, then there exists $\psi \in\,L^2((-\infty,0);H)$ such that
  $\varphi= \Pi_1 z$ is a weak solution to
  \begin{equation*}
    \mu \frac{\partial^2 \varphi}{\partial t^2} (t) = A \varphi(t) - \frac{\partial \varphi}{\partial t}(t) - Q^2 D\!F(\varphi(t)) + Q \psi.
  \end{equation*}
This means that
  \begin{equation*}
    \psi(t) = Q^{-1} \left( \mu \frac{\partial^2 \varphi}{\partial t^2}(t) + \frac{\partial \varphi}{\partial t}(t) - A \varphi(t) + Q^2 D\!F(\varphi(t))  \right)
  \end{equation*}
  and \eqref{I-mu-explicit-eq} follows.

  By the same argument, if $I_{-\infty,0}(\varphi) < +\infty$, then it follows that
 \begin{equation} \label{I-explicit-eq}
    I_{-\infty}(\varphi) = \int_{-\infty}^0 \left|Q^{-1} \left(\frac{\partial \varphi}{\partial t}(t) - A \varphi(t) + Q^2 D\!F(\varphi(t)) \right) \right|_H^2 dt
  \end{equation}

\begin{Theorem}
\label{m2-10}
 For any fixed $\mu>0$ and $(x,y) \in\,D((-A)^{1/2}Q^{-1})\times D(Q^{-1})$ it holds
  \begin{equation}
    V^\mu(x,y) = \left|(-A)^{\frac{1}{2}}Q^{-1} x \right|_H^2 + 2F(x) + \mu \left|Q^{-1} y \right|_H^2.
  \end{equation}
  Moreover
  \begin{equation}
    V(x) = \left|(-A)^{\frac{1}{2}} Q^{-1} x \right|_H^2 + 2F(x).
  \end{equation}
In particular,  for any $\mu>0$,
  \begin{equation*}
    V_\mu(x): = \inf_{y \in H^{-1}} V^\mu(x,y) = V^\mu(x,0) = V(x).
  \end{equation*}
\end{Theorem}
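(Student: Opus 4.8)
The plan is to read off the quasi-potential from the representation of $V^\mu$ proved in the previous theorem by exploiting the gradient structure of Hypothesis~\ref{H2} to complete a square in the integrand of $I^\mu_{-\infty}$. Set
\[ U(x) = \tfrac12\left|(-A)^{1/2}Q^{-1}x\right|_H^2 + F(x), \]
and note that the combination appearing in \eqref{I-mu-explicit-eq} is $-Ax + Q^2 D\!F(x) = Q^2\nabla U(x)$, where $\nabla$ is the $H$-gradient, while the claimed answer is exactly $V^\mu(x,y) = 2U(x) + \mu|Q^{-1}y|_H^2$, i.e. twice the mechanical energy $\mathcal{E}(x,y) = U(x) + \tfrac\mu2|Q^{-1}y|_H^2$.

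First I would prove the algebraic identity: writing $\varphi = \Pi_1 z$ and using $\langle Q^{-1}a,Q^{-1}b\rangle_H$-bilinearity together with the two total-derivative relations $\langle \nabla U(\varphi),\dot\varphi\rangle_H = \frac{d}{dt}U(\varphi)$ and $\langle Q^{-1}\ddot\varphi, Q^{-1}\dot\varphi\rangle_H = \tfrac12\frac{d}{dt}|Q^{-1}\dot\varphi|_H^2$, one obtains
\[ \left|Q^{-1}\!\left(\mu\ddot\varphi + \dot\varphi + Q^2\nabla U(\varphi)\right)\right|_H^2 = \left|Q^{-1}\!\left(\mu\ddot\varphi - \dot\varphi + Q^2\nabla U(\varphi)\right)\right|_H^2 + \frac{d}{dt}\left[2\mu|Q^{-1}\dot\varphi|_H^2 + 4U(\varphi)\right]. \]
Inserting this into \eqref{I-mu-explicit-eq} and integrating over $(-\infty,0]$ (the manipulation being justified first on finite intervals $[t_1,0]$, where the regularity coming from $I^\mu_{t_1,0}(z)<+\infty$ makes the endpoint terms meaningful, and then letting $t_1\to-\infty$ via $I^\mu_{-\infty}=\sup_{t<0}I^\mu_{t,0}$), the constraint $|C_\mu^{-1/2}z(t)|_\H\to 0$ makes the contribution at $-\infty$ vanish: indeed $|C_\mu^{-1/2}z|_\H^2 = |(-A)^{1/2}Q^{-1}\varphi|_H^2 + \mu|Q^{-1}\dot\varphi|_H^2$, and since $Q(-A)^{-1/2}$ is Hilbert--Schmidt by \eqref{m1} this forces $\varphi(t)\to 0$ in $H$, hence $U(\varphi(t))\to 0$ and $|Q^{-1}\dot\varphi(t)|_H\to 0$. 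This yields
\[ I^\mu_{-\infty}(z) = \tfrac12\int_{-\infty}^0 \left|Q^{-1}\!\left(\mu\ddot\varphi - \dot\varphi + Q^2\nabla U(\varphi)\right)\right|_H^2 dt + 2U(x) + \mu|Q^{-1}y|_H^2, \]
and, discarding the nonnegative integral and taking the infimum over admissible $z$ through the representation above, the lower bound $V^\mu(x,y)\geq |(-A)^{1/2}Q^{-1}x|_H^2 + 2F(x) + \mu|Q^{-1}y|_H^2$.

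For the matching upper bound I would construct a recovery path that kills the residual integral, i.e. one for which $\mu\ddot\varphi - \dot\varphi + Q^2\nabla U(\varphi) = 0$. Under the time reversal $s=-t$, $\tilde\varphi(s)=\varphi(-s)$, this is precisely the deterministic damped wave equation $\mu\ddot{\tilde\varphi} = A\tilde\varphi - \dot{\tilde\varphi} - Q^2 D\!F(\tilde\varphi)$ with data $(\tilde\varphi(0),\dot{\tilde\varphi}(0)) = (x,-y)$; setting $z(t) = (\varphi(t),\dot\varphi(t))$ gives $I^\mu_{-\infty}(z) = 2U(x)+\mu|Q^{-1}y|_H^2$, so all that remains is to verify the decay constraint, namely that $\tilde\varphi(s)$ relaxes to $0$ in the weighted energy norm. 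The energy identity $\frac{d}{ds}\mathcal{E}(\tilde\varphi,\dot{\tilde\varphi}) = -|Q^{-1}\dot{\tilde\varphi}|_H^2$ shows $\mathcal{E}$ is a strict Lyapunov function, and $\langle\nabla U(u),u\rangle_H \geq |(-A)^{1/2}Q^{-1}u|_H^2$ forces its only critical point to be the origin.

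The main obstacle is precisely this relaxation statement in infinite dimensions: upgrading the Lyapunov decay to genuine convergence $(\tilde\varphi(s),\dot{\tilde\varphi}(s))\to(0,0)$ in $D((-A)^{1/2}Q^{-1})\times D(Q^{-1})$. I would carry this out by a LaSalle-type argument, combining the dissipation bound $\int_0^{+\infty}|Q^{-1}\dot{\tilde\varphi}(s)|_H^2\,ds = \mathcal{E}(x,-y) - \lim_{s\to+\infty}\mathcal{E} < +\infty$ with precompactness of the trajectory in the weighted energy space --- which follows from the dissipativity of $\Smu(t)$ in \eqref{negative-type-eq} and the fact that, $D\!F$ being Lipschitz, the nonlinearity is a lower-order perturbation --- so that the $\omega$-limit set is the largest invariant subset of $\{\dot u = 0\}$, i.e. $\{0\}$; continuity of $\mathcal{E}$ then yields $\mathcal{E}(\tilde\varphi(s),\dot{\tilde\varphi}(s))\to 0$, which is exactly the vanishing of both components of $|C_\mu^{-1/2}z|_\H$. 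This proves the formula for $V^\mu$. The formula for $V$ follows from the same completion of squares applied to the representation \eqref{I-explicit-eq}, the recovery path now being the time reversal of the heat flow (which relaxes to $0$ by the same but simpler argument), giving $V(x)=2U(x)$; the final identity $V_\mu(x) = V^\mu(x,0) = V(x)$ is then immediate, since $\mu|Q^{-1}y|_H^2\geq 0$ vanishes exactly at $y=0\in D(Q^{-1})$.
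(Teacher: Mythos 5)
Your overall architecture coincides with the paper's: the same completion of squares in $I^\mu_{-\infty}$ (your identity is exactly the paper's rewriting \eqref{I-mu-rewrite-eq}), the same evaluation of the total-derivative term at the endpoints using the constraint $|C_\mu^{-1/2}z(t)|_\H\to 0$, the same lower bound by discarding the nonnegative square, and the same recovery path, namely the time reversal of the deterministic damped wave flow started at $(x,-y)$. You also correctly identify that the entire weight of the theorem rests on the relaxation statement $|C_\mu^{-1/2}\tilde z(t)|_\H\to 0$ as $t\to+\infty$ (the paper's Lemma \ref{go-to-zero-lemma}). The final deductions for $V$ and $V_\mu$ are routine and match the paper.

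The gap is in your proof of that relaxation lemma. You propose a LaSalle invariance argument, whose crux is precompactness of the orbit in the weighted energy space, and you justify this by ``the dissipativity of $S_\mu(t)$ and the fact that $D\!F$ is a lower-order Lipschitz perturbation.'' That justification does not work: Lipschitz continuity of $D\!F$ gives no compactness whatsoever (and $Q$ itself need not be compact, e.g.\ $Q=I$ in $d=1$), so the Duhamel term $\int_0^t S_\mu(t-s)Q_\mu QD\!F(\varphi(s))\,ds$ is not obviously precompact in the $C_1^{-1/2}$-topology. The ingredient that could rescue this route is Hypothesis \ref{H1}: condition \eqref{m1} makes $Q(-A)^{-1/2}$ Hilbert--Schmidt, hence the energy-bounded orbit $\{\varphi(s)\}$ is precompact in $H$, and one must then run an asymptotic-compactness argument for the convolution against the exponentially stable semigroup; none of this is in your sketch. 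Note also that your dissipation identity only gives $\int_0^\infty|Q^{-1}\dot{\tilde\varphi}|_H^2\,ds<\infty$, which controls the velocity but says nothing about decay of the position. The paper avoids compactness entirely by a second, less obvious energy estimate: testing the equation against $Q^{-2}(\mu\,\partial_t\varphi+\varphi)$ yields
\[
\int_0^{\infty}\bigl|Q^{-1}(-A)^{1/2}\varphi(t)\bigr|_H^2\,dt<+\infty,
\]
i.e.\ square-integrability in time of the weighted \emph{position} norm, and this, fed into the variation-of-constants formula together with \eqref{negative-type-eq} and \eqref{m2-00}, gives the convergence directly. You should either adopt that estimate or supply the missing asymptotic-compactness argument; as written, the central step of the proof is asserted rather than proved.
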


\begin{proof}
  First, we observe that if $\varphi(t) = \Pi_1 z(t)$, then
  \begin{equation} \label{I-mu-rewrite-eq}
\begin{array}{l}
\ds{I^\mu_{-\infty}(z) = \frac{1}{2} \int_{-\infty}^0 \left|Q^{-1} \left( \mu \frac{\partial \varphi}{\partial t}(t) - \frac{\partial \varphi}{\partial t} (t) - A \varphi(t) + Q^2 D\!F(\varphi(t))  \right) \right|_H^2 dt }\\
\vs
\ds{+ 2 \int_{-\infty}^0 \left< Q^{-1}\frac{\partial \varphi}{\partial t} (t), Q^{-1} \left(\mu \frac{\partial^2 \varphi}{\partial t^2} (t) - A \varphi(t) \right) + Q D\!F(\varphi(t)) \right>_H dt.}
  \end{array}
  \end{equation}
Now, if
\[\lim_{t \to -\infty} |C_\mu^{-1/2} z(t)|_\H =0,\]
  then
  \begin{equation*}
    \lim_{t \to -\infty} \left|(-A)^{\frac{1}{2}} Q^{-1} \varphi(t) \right|_H + \left|Q^{-1} \frac{\partial \varphi}{\partial t} (t) \right|_H = 0,
  \end{equation*}
so that
\[  \begin{array}{l}
 \ds{2 \int_{-\infty}^0 \left< Q^{-1}\frac{\partial \varphi}{\partial t} (t), Q^{-1} \left(\mu \frac{\partial^2 \varphi}{\partial t^2} (t) - A \varphi(t) \right) + Q D\!F(\varphi(t)) \right>_H dt}\\
\vs
\ds{= \left| (-A)^{\frac{1}{2}} Q^{-1} \varphi(0) \right|_H^2 + 2 F(\varphi(0)) + \mu \left|Q^{-1} \frac{\partial \varphi}{\partial t}(0) \right|_H^2.}
  \end{array}\]
This yields
\[V^\mu(x,y) \ge \left| (-A)^{\frac{1}{2}} Q^{-1} x \right|_H^2 + 2 F(x) + \mu \left|Q^{-1}  y \right|_H^2.\]

Now, let $\tilde{z}(t)$ be a mild solution of the  problem
  \begin{equation*}
   \tilde{z}(t) =  \Smu(t) (x,-y) - \int_0^t \Smu(t-s) \Qmu Q D\!{F}(\tilde{z}(s)) ds,
  \end{equation*}
and let  $(x,y) \in D(C_\mu^{-1/2})$.
  Then $\tilde{\varphi} (t)= \Pi_1 \tilde{z}(t)$ is a weak solution of the problem
  \begin{equation*}
 \mu \frac{\partial^2  \tilde{\varphi}}{\partial t^2}(t) = A \tilde{\varphi}(t) - \frac{\partial  \tilde{\varphi}}{\partial t} (t) - Q^2 D\!F(\tilde{\varphi}(t)),\ \ \ \  \tilde{\varphi}(0)=x,\ \ \ \frac{ \tilde{\varphi}}{\partial t}(0)=-y.
   \end{equation*}
Moreover, as proven below in Lemma \ref{go-to-zero-lemma},
\[\lim_{t \to -\infty} \left| C_\mu^{-1/2} \tilde{z}(t) \right|_\H =0.\]
  Then, if we define $\hat{\varphi}(t) = \tilde{\varphi}(-t)$ for $t \leq 0$,  we see that $\hat{\varphi}(t)$ solves
  \begin{equation*}
    \mu \frac{\partial^2 \hat{\varphi}}{\partial t^2} (t) = A \hat{\varphi}(t) + \frac{\partial \hat{\varphi}}{\partial t} (t) -Q^2 D\!F(\hat{\varphi}(t)),\ \ \ \hat{\varphi}(0)=x,\ \ \ \frac{\partial \hat{\varphi}}{\partial t}(0) = y.
  \end{equation*}
Thanks to \eqref{I-mu-rewrite-eq} this yields
  \begin{equation*}
    I^\mu_{-\infty}(\hat{\varphi}) = \left| (-A)^{\frac{1}{2}} Q^{-1} x \right|_H^2 + 2 F(x) + \mu \left|Q^{-1} y \right|_H^2.
  \end{equation*}
  and then
  \begin{equation*}
    V^\mu(x,y) = \left| (-A)^{\frac{1}{2}} Q^{-1} x \right|_H^2 + 2 F(x) + \mu \left|Q^{-1} y \right|_H^2.
  \end{equation*}

As known, an analogous result holds for $V(x)$.  In what follows, for completeness, we give a proof. We have
\begin{equation}
 \label{I-rewrite-eq}
 \begin{array}{l}
\ds{    I_{-\infty}(\varphi) = \frac{1}{2} \int_{-\infty}^0 \left|Q^{-1} \left( \frac{\partial \varphi}{\partial t} (t) + A \varphi(t) -Q^2 D\!F(\varphi(t))  \right) \right|_H^2 dt } \\
\vs
\ds{+ 2 \int_{-\infty}^0 \left<Q^{-1} \frac{\partial \varphi}{\partial t} (t), Q^{-1} \left( -A \varphi(t) + Q^2 D\!F(\varphi(t)) \right)   \right>_H dt.}
\end{array}
  \end{equation}
From this we see that
  \begin{equation*}
    V(x) \ge \left|(-A)^{\frac{1}{2}} Q^{-1} x \right|_H^2 + 2 F(x).
  \end{equation*}
Just as for the wave equation, for $x \in D((-A)^{\frac{1}{2}} Q^{-1})$, we define $\tilde{\varphi}$ to be the solution of
  \begin{equation*}
    \tilde{\varphi}(t) = e^{tA} x - \int_0^t e^{(t-s)A} Q^2 D\!F(\tilde{\varphi}(s)) ds.
  \end{equation*}
We have
  \begin{equation*}
    \lim_{t \to +\infty}|(-A)^{\frac{1}{2}} Q^{-1}\tilde{\varphi}(t)|_H =0.
  \end{equation*}
Then, if we define $\hat{\varphi}(t) = \tilde{\varphi}(-t)$ we get
  \begin{equation*}
    \frac{\partial \hat{\varphi}}{\partial t} (t) = -A \hat{\varphi}(t) + Q^2 D\!F( \hat{\varphi}(t)),
  \end{equation*}
   so that
   \begin{equation*}
     I_{-\infty}(\hat{\varphi}) = \left|(-A)^{\frac{1}{2}} Q^{-1} x \right|_H^2 + 2 F(x)
   \end{equation*}
   and
   \begin{equation*}
     V(x) = \left|(-A)^{\frac{1}{2}} Q^{-1} x \right|_H^2 + 2 F(x).
   \end{equation*}
\end{proof}
Now, in order to conclude the proof of Theorem \ref{m2-10}, we have to prove the following result.

\begin{Lemma} \label{go-to-zero-lemma}
  Let $(x,y) \in\, D((-A)^{\frac{1}{2}}Q^{-1})\times D(Q^{-1})$ and let $\varphi$ solve the problem
  \begin{equation}
  \label{m2-0}
 \mu \frac{\partial^2 \varphi}{\partial t^2} (t) = A \varphi(t) - \frac{\partial \varphi}{\partial t} (t) - Q^2 D\!F(\varphi(t)),\ \ \ \ \  \varphi(0) = x,\ \
      \frac{\partial \varphi}{\partial t} (0) = y.
        \end{equation}
  Then \[z(t) = \left( \varphi(t), \frac{\partial \varphi}{\partial t}(t) \right) \in\,D((-A)^{\frac{1}{2}})\times D(Q^{-1}),\ \ \ \ t \geq 0,\]
  and
  \begin{equation}
  \label{m2-12}
    \lim_{t \to +\infty} \left|C_1^{-1/2} z(t)  \right|_\H =0.
  \end{equation}
\end{Lemma}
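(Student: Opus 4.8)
The plan is to reduce the whole statement to the decay of a single energy (Lyapunov) functional. Since $A$ and $Q$ are simultaneously diagonal, expanding $z=(u,v)$ in the basis $\{e_k\}$ exactly as in \eqref{m2-1} gives
\[|C_1^{-1/2}z|_\H^2 = |(-A)^{1/2}Q^{-1}u|_H^2 + |Q^{-1}v|_H^2,\]
so that \eqref{m2-12} is equivalent to $|(-A)^{1/2}Q^{-1}\varphi(t)|_H^2 + |Q^{-1}\partial_t\varphi(t)|_H^2\to 0$. Accordingly I introduce the natural energy of \eqref{m2-0},
\[E(t)=\tfrac\mu2\,|Q^{-1}\partial_t\varphi(t)|_H^2 + \tfrac12\,|(-A)^{1/2}Q^{-1}\varphi(t)|_H^2 + F(\varphi(t)),\]
which is non-negative because $F\geq 0$ and satisfies $E(t)\geq \tfrac{1\wedge\mu}{2}\,|C_1^{-1/2}z(t)|_\H^2$. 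Hence it is enough to prove that $E(t)\to 0$ as $t\to+\infty$.

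The first identity comes from testing \eqref{m2-0} against $Q^{-2}\partial_t\varphi$: using that $Q$ commutes with $A$, that $\langle A\varphi,Q^{-2}\partial_t\varphi\rangle_H=-\tfrac12\tfrac{d}{dt}|(-A)^{1/2}Q^{-1}\varphi|_H^2$, and that $\langle Q^2D\!F(\varphi),Q^{-2}\partial_t\varphi\rangle_H=\tfrac{d}{dt}F(\varphi)$, one obtains the dissipation relation
\[\frac{d}{dt}E(t) = -|Q^{-1}\partial_t\varphi(t)|_H^2\leq 0.\]
Thus $E$ is non-increasing, converges to some $E_\infty\geq 0$, and $\int_0^{+\infty}|Q^{-1}\partial_t\varphi|_H^2\,dt=E(0)-E_\infty<+\infty$. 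Because $Q^{-1}$ is unbounded, this identity (and the finiteness of $E$ itself) must first be justified; I would do so by a finite-dimensional Galerkin scheme $\varphi_N=P_N\varphi$, for which every computation above is elementary, deriving $N$-uniform bounds from the energy identity and passing to the limit. Since $S_\mu(t)$ commutes with $C_1^{-1/2}$ (as noted after \eqref{negative-type-eq}) and the nonlinear forcing $Q_\mu\hat{Q}D\!\hat F$ lands, after applying $C_1^{-1/2}$, in $\mu^{-1}QD\!F(\varphi)\in H$, this scheme also yields the regularity $z(t)\in D((-A)^{1/2}Q^{-1})\times D(Q^{-1})$ asserted in the statement.

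To upgrade $E(t)\to E_\infty$ into $E_\infty=0$, I need the position $\varphi$ to decay along a sequence, for which I use the multiplier $Q^{-2}\varphi$. Setting $G(t)=\mu\langle Q^{-1}\partial_t\varphi,Q^{-1}\varphi\rangle_H+\tfrac12|Q^{-1}\varphi|_H^2$ and using $\langle D\!F(\varphi),\varphi\rangle_H\geq 0$ gives
\[\frac{d}{dt}G(t) = \mu\,|Q^{-1}\partial_t\varphi|_H^2 - |(-A)^{1/2}Q^{-1}\varphi|_H^2 - \langle D\!F(\varphi),\varphi\rangle_H.\]
For any $\beta\in(0,1/\mu)$ the functional $L=E+\beta G$ is bounded (the bound on $E$ controls $|Q^{-1}\partial_t\varphi|_H$ and, through $\alpha_1\leq\alpha_k$, also $|Q^{-1}\varphi|_H$) and satisfies $\tfrac{d}{dt}L\leq -(1-\beta\mu)|Q^{-1}\partial_t\varphi|_H^2-\beta|(-A)^{1/2}Q^{-1}\varphi|_H^2$. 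Integrating yields $\int_0^{+\infty}|C_1^{-1/2}z(t)|_\H^2\,dt<+\infty$, so there is $t_n\to+\infty$ with $|C_1^{-1/2}z(t_n)|_\H\to 0$. In particular $|(-A)^{1/2}Q^{-1}\varphi(t_n)|_H\to 0$, which dominates $|\varphi(t_n)|_H$; hence $\varphi(t_n)\to 0$ in $H$ and, by continuity of $F$ with $F(0)=0$, $F(\varphi(t_n))\to 0$. Therefore $E(t_n)\to 0$, and monotonicity forces $E_\infty=0$, whence $E(t)\to 0$ and $|C_1^{-1/2}z(t)|_\H^2\leq \tfrac{2}{1\wedge\mu}E(t)\to 0$.

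The main obstacle is precisely the rigorous justification of the two integral identities: the unboundedness of $Q^{-1}$ means that a priori neither $E$ nor $G$ is finite or differentiable along the mild solution, and the formal integrations by parts such as $\langle A\varphi,Q^{-2}\varphi\rangle_H=-|(-A)^{1/2}Q^{-1}\varphi|_H^2$ are only meaningful as convergent Fourier sums. The Galerkin truncation handles all of this simultaneously, since on each finite-dimensional subspace the manipulations are exact and the resulting bounds are uniform in $N$; the only delicate points in the passage to the limit are the weak lower semicontinuity of the quadratic norms and the convergence $F(\varphi_N)\to F(\varphi)$, which follows from the Lipschitz continuity \eqref{m2-00} of $D\!F$.
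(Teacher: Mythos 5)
Your proposal is correct, and while it opens with the same two multiplier identities as the paper, it closes the argument by a genuinely different route. The paper also tests \eqref{m2-0} against $Q^{-2}\partial_t\varphi$ to get monotonicity of the energy $\Phi_\mu(z(t))$, and its second functional $\bigl|Q^{-1}(\mu\,\partial_t\varphi+\varphi)\bigr|_H^2+\mu\bigl|(-A)^{1/2}Q^{-1}\varphi\bigr|_H^2+2\mu F(\varphi)$ is exactly your $2\mu E+2G$, i.e.\ your $L$ at the endpoint $\beta=1/\mu$ (which is why the paper only retains $\int_0^\infty|(-A)^{1/2}Q^{-1}\varphi|_H^2\,dt<\infty$, whereas your choice $\beta<1/\mu$ also keeps the velocity term — though that already follows from the energy identity alone). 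The divergence is in the conclusion: the paper writes $z(T+t)=\Smu(t)z(T)-\int_T^{T+t}\Smu(T+t-s)\Qmu Q\,D\!F(\varphi(s))\,ds$ and uses the exponential decay \eqref{negative-type-eq} of $\Smu$, its commutation with $C_1^{-1/2}$, and the $L^2$-in-time bound on $\varphi$ to make both Duhamel terms small; you instead extract a sequence $t_n$ along which $|C_1^{-1/2}z(t_n)|_\H\to 0$ from the integrability of $|C_1^{-1/2}z|_\H^2$, and upgrade to full convergence via monotonicity of $E$ (a Lyapunov--LaSalle argument). Your route is more elementary and self-contained — it never invokes the semigroup or its stability — at the cost of the subsequence step and of checking $F(\varphi(t_n))\to 0$, which you do correctly. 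Two small points: you should state explicitly that $\beta$ is taken small enough that $|G|\le \beta^{-1}E/2$ (say), so that $L$ is bounded below by a positive multiple of $E$ and the integrated dissipation is finite; and your remark that the formal identities need justification (Galerkin truncation, since $Q^{-1}$ is unbounded) is a point the paper passes over silently, so you are, if anything, more careful there.
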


\begin{proof}
  If in \eqref{m2-0} we take the inner product with  $2 Q^{-2} \partial \varphi/\partial t(t)$, we have
  \begin{equation} \label{inner-product-with-varphi-prime}
2\left| Q^{-1} \frac{\partial \varphi}{\partial t} (t) \right|_H^2 = -\frac{d}{dt} \le(\mu\left|Q^{-1} \varphi(t)\right|_H^2 +\left|(-A)^{\frac{1}{2}} Q^{-1} \varphi(t) \right|_H^2 +2F(\varphi(t))\r).
  \end{equation}
Therefore, if we define
  \[
    \Phi_\mu(x,y) = \left|(-A)^{\frac{1}{2}} Q^{-1} x \right|_H^2 + \mu \left| Q^{-1} y \right|_H^2 + 2 F(x),
  \]
as a consequence of \eqref{inner-product-with-varphi-prime} we get
  \begin{equation} \label{energy-decreases-eq}
     \Phi_\mu \left( z(t)  \right) \leq \Phi_\mu(u,v).
  \end{equation}

  Next, by \eqref{m2-0} and the assumption that $\left<DF(x),x \right> \ge 0$, we calculate that
  \[ 
  \begin{array}{l}
  \ds{\frac{d}{dt} \left|Q^{-1} \left(\mu \frac{\partial \varphi}{\partial t}(t) + \varphi(t) \right) \right|_H^2
  = 2 \left<Q^{-1} \left(\mu \frac{\partial \varphi}{\partial t}(t) + \varphi(t) \right), Q^{-1}A \varphi(t) - Q DF(\varphi(t))  \right>_H}\\
  \vs
  \ds{\leq -2 \left|Q^{-1} (-A)^{\frac{1}{2}} \varphi(t) \right|_H^2 - \mu \frac{d}{dt} \left|Q^{-1} (-A)^{\frac{1}{2}} \varphi(t) \right|_H^2 -2\mu \frac{d}{dt} F(\varphi(t)).}
  \end{array}
  \]
  A consequence of this is that
  \begin{equation} \label{varphi-in-L2}
    2 \int_0^\infty \left|Q^{-1} (-A)^{\frac{1}{2}} \varphi(t) \right|_H^2 dt \leq \left|\mu Q^{-1}y + Q^{-1} x \right|_H^2 + \mu\left|Q^{-1} (-A)^{\frac{1}{2}} x \right|_H^2 + 2\mu F(x).
  \end{equation}
Now, if $z(t) = \left(\varphi(t), \frac{\partial \varphi}{\partial t}(t) \right)$,  for any $t,T>0$ we have
  \begin{equation*}
    z(T+t) = \Smu(t) z(T) - \int_T^{T+t} \Smu(T+t -s) \Qmu Q D\!F(\varphi(s)) ds.
  \end{equation*}
By \eqref{negative-type-eq}, and \eqref{m2-00} we have
  \begin{equation*}
  \begin{array}{l} \displaystyle
{    \left| C_1^{-1/2} \int_T^{T+t} \Smu(t+T - s) \Qmu Q D\!F(\varphi(s)) ds \right|_\H} \\
\vs
\displaystyle {\leq  \int_T^{T+t} \left| \Smu(t + T -s) \Qmu (-A)^{\frac{1}{2}} D\!F(\varphi(s)) \right|_\H ds}\\
\vs
    \displaystyle {\leq c\int_T^{T+t} e^{-\omega_\mu( t+T -s)}\left|(-A)^{\frac{1}{2}} Q D\!F(\varphi(s)) \right|_{H^{-1}}ds}\\
    \vs
    \displaystyle {\leq c \int_T^{T+t}  e^{-\omega_\mu (t+T -s)} \left|\varphi(s) \right|_H ds \leq c \left| \varphi \right|_{L^2((T, T+t);H)}.}
  \end{array}
  \end{equation*}
Therefore, by \eqref{varphi-in-L2}, for any $\e>0$ we can pick $T_\e>0$ large enough so that for all $t>0$
  \begin{equation*}
    \left| C_1^{-1/2} \int_{T_\e}^{T_\e+t} \Smu(t+T_\e - s) \Qmu Q D\!F(\varphi(s)) ds \right|_\H  < \frac{\epsilon}{2}.
  \end{equation*}

  Next, by \eqref{negative-type-eq},
  \begin{equation*}
    \left| C_1^{-1/2} \Smu(t) z(T) \right|_\H \leq M_\mu e^{-\omega_\mu t} \left|C_1^{-1/2} z(T) \right|_\H.
  \end{equation*}
Then, as
\[|C_1^{-1/2} z|_\H \leq c\, \Phi_\mu(z),\ \ \ z \in \H,\]  by \eqref{energy-decreases-eq} we can find a $t_\e$ large enough so that for all $T>0$ and $t>t_\e$
  \begin{equation*}
    \left| C_1^{-1/2} \Smu(t) z(T) \right|_\H < \frac{\epsilon}{2}.
  \end{equation*}
   Then for $t> T_\e+ t_\e$
  \begin{equation*}
    \left|C_1^{-1/2} z(t) \right|_\H < \epsilon
  \end{equation*}
  which is what we were trying to prove.
\end{proof}

\end{document}